\algnewcommand\algorithmicforeach{\textbf{for each}}
\newtheorem{theorem}{Theorem}
\newtheorem{definition}[theorem]{Definition}
\newtheorem*{problem}{Main Problem}
\theoremstyle{definition}
\newtheorem*{example}{Example}
\DeclareMathOperator{\cut}{cut}
\DeclareMathOperator{\Ncut}{Ncut}
\DeclareMathOperator{\assoc}{assoc}
\DeclareMathOperator{\Nassoc}{Nassoc}
\title{Finding Minimum Spanning Forests in a Graph}
\author{Abdel-Rahman Madkour, Phillip Nadolny, Matthew Wright}
\affil{St.\ Olaf College, Northfield, MN USA}
\date{\today}
\begin{document}
\maketitle

\begin{abstract}
  We introduce a graph partitioning problem motivated by computational topology and propose two algorithms that produce approximate solutions.
  Specifically, given a weighted, undirected graph $G$ and a positive integer $k$, we desire to find $k$ disjoint trees within $G$ such that each vertex of $G$ is contained in one of the trees and the weight of the largest tree is as small as possible.
  We are unable to find this problem in the graph partitioning literature, but we show that the problem is NP-complete.
  We then propose two approximation algorithms, one that uses a spectral clustering approach and another that employs a dynamic programming strategy, which produce near-optimal partitions on a family of test graphs.
  We describe these algorithms and analyze their empirical performance.
\end{abstract}

\section{Introduction}

We consider the following graph partitioning problem, motivated by work in computational topology: given an edge-weighted, undirected graph $G$ and a positive integer $k$, we desire to find $k$ subtrees $T_1, \ldots, T_k$ within $G$ such that (1) each vertex of $G$ is contained in $T_i$ for some $i$ and (2) the maximum weight of any $T_i$ is as small as possible.
Although graph partitioning problems have been well-studied, we have been unable to find this particular problem in the literature \cite{GraphSurvey, ClusterSurvey}.
Furthermore, standard variations of graph partitioning problems, such as Euclidean sum-of-squares clustering, are known to be NP-hard \cite{Balanced_k-Means}.
Our particular graph partitioning problem is also NP-hard, but a solution to this problem is important for efficiently parallelizing an expensive computation in multidimensional persistent homology \cite{LesnickWright}.
Thus, we desire an efficient algorithm that gives a near-optimal solution to the graph partitioning problem described above.

\subsection*{Motivation}

The computational topology software RIVET performs an expensive computation that is organized by a graph structure \cite{LesnickWright}. Specifically, the program must compute a ``barcode template'' at each vertex of an undirected, edge-weighted graph $G$, which arises as the dual graph of a certain line arrangement.
In this paper we will avoid the details of the barcode template, but simply regard it as ``data'' that must be computed at each vertex of $G$.
The computation of barcode templates is typically expensive.
However, computations at adjacent vertices are similar, and it is sometimes possible to use the result computed at one vertex to quickly obtain the barcode template at an adjacent vertex. 
Edge weights in the graph give computational cost estimates of this process.
Specifically, if vertex $v_1$ is connected to vertex $v_2$ by an edge of weight $w$, then the time required to obtain the data at $v_2$, given the computed data at $v_1$, is roughly proportional to $w$. 

Thus, RIVET is designed to perform an expensive computation at some starting vertex, and then follow edges of $G$ to each other vertex, obtaining the data at each vertex by updating the data from the previous vertex. Since finding the shortest path that covers all vertices in a graph is NP-hard, RIVET uses an approximation based on a minimum spanning tree (MST) of $G$ \cite{LesnickWright}.

However, it is highly desirable to parallelize the RIVET computation, dividing the computational work among multiple processors. That is, if $k$ processors are available, we would like to partition the vertices of $G$ into $k$ sets of ``nearby'' vertices so that a single processor can compute the data required at all vertices in a single set.
To ensure that the entire computation finishes as quickly as possible, we desire that vertices within a single set can be connected by a tree with edges of low weight, and that the maximal weight of any such tree is as low as possible.
Thus, we arrive at the problem discussed in this paper.

\subsection*{Outline of Paper}

We begin by discussing some mathematical background for this problem, including a proof that the problem is NP-complete, in \Cref{Background}.
We then discuss two approximation algorithms that we have implemented: a spectral algorithm in \Cref{SpectralApproach} and a dynamic programming (DP) algorithm in \Cref{DPApproach}.
\Cref{EmpiricalData} presents some empirical data about the performance of these algorithms on a family of test graphs, and \Cref{Conclusion} offers some conclusions and directions for future work.

\section{Mathematical Background}\label{Background}

\subsection*{Terminology}\label{terminology}

We begin by reviewing some standard terminology from graph theory, and then formally introduce the concept of a minimum spanning $k$-forest.

A \emph{graph} $G$ is an ordered pair $(V,E)$, where $V$ is a set of vertices and $E$ is a set of edges, each of which connects two vertices. Formally, an edge is an unordered pair of distinct vertices in $V$, and each such pair occurs at most once in $E$.  
We sometimes say that the graph is \emph{undirected} (since the edges are unordered pairs) and that the graph is \emph{simple} (since each edge occurs at most once).

A \emph{subgraph} of $G$ is a pair $(V',E')$, with $V' \subseteq V$ and $E' \subseteq E$, that is itself a graph. The \emph{degree} of a vertex $v$ is the number of edges that contain $v$.

A graph is \emph{weighted} if each edge is assigned a number, which is referred to as the ``weight'' of the edge. 
For our purposes, we restrict edge weights to nonnegative integers.
Let $w(G)$ denote the weight of graph $G$, which we define to be the sum of the weights of all edges in the graph.

A \emph{cycle} in a graph is an ordered set of vertices $\{v_1, v_2, \ldots, v_j\}$ such that the graph contains an edge $\{v_i, v_{i+1}\}$ for all $i \in \{1, \ldots, j-1\}$ and also an edge $\{v_j, v_1\}$.
A \emph{tree} is a connected graph with no cycles.

A \emph{spanning tree} of a graph $G$ is a subgraph of $G$ that has no cycles (and is thus a tree), but contains all vertices of $G$.
A \emph{minimum spanning tree} (MST) of $G$ is a spanning tree that has minimum weight among all spanning trees of $G$.
Any weighted graph $G$ has one or more minimum spanning trees.
Minimum spanning trees can be found by various algorithms, such as Kruskal's algorithm, which runs in $O(|E| \log |E|)$ time \cite{AlgText}.

We now define a minimum spanning $k$-forest of $G$, which is the central object of study in this paper.

\begin{definition}\label{minSpanForest}
	A \textbf{spanning $k$-forest} of $G$ is a collection of $k$ trees, $T_1, \ldots, T_k$, each a subgraph of $G$, such that each vertex of $G$ is contained in $T_i$ for some $i \in \{1, \ldots, k\}$.
		
    A \textbf{minimum spanning $k$-forest} of $G$ is a spanning $k$-forest such that the quantity $\max\{w(T_1), \ldots, w(T_k)\}$ is minimum among all spanning $k$-forests of $G$.
    The quantity $\max\{w(T_1), \ldots, w(T_k)\}$ is the \textbf{weight} of the minimum spanning $k$-forest.
\end{definition}

Note that a particular graph may have \emph{many} minimum spanning $k$-forests, since there may be many spanning $k$-forests that contain ``heaviest'' trees of the same weight.

\subsection*{Problem Statement}

Given \Cref{minSpanForest}, our problem can be stated as follows: 

\begin{problem}
	Given a graph $G$ and an integer $k > 1$, find a minimum spanning $k$-forest of $G$.
\end{problem}

In the computational setting described in the Introduction, each tree of the minimum spanning $k$-forest gives a set of vertices at which a single processor will compute the required data, along with edges connecting the vertices used in the computational process.
The definition of minimum spanning $k$-forest minimizes the weight of the ``heaviest'' tree, which ensures that the longest-running processor finishes as quickly as possible.

\subsection*{Connection to Minimum Spanning Trees}

It may seem that a minimum spanning $k$-forest of $G$ should be related to one or more minimum spanning trees of $G$.
While minimum spanning $k$-forests and minimum spanning trees are similar concepts, the minimality criterion of a spanning $k$-forest makes its construction much more difficult than that of a MST.
Specifically, the minimality criterion of a spanning $k$-forest involves only the maximum-weight tree of the forest, not the total weight of all trees.

Given any spanning tree $T$ of a graph $G$, one can simply remove any $k-1$ edges from $T$ to produce a spanning $k$-forest of $G$.
However, if $T$ is a minimum spanning tree of $G$, it might \emph{not} be possible to obtain a minimum spanning $k$-forest of $G$ by removing edges from $T$, as \Cref{tree_example} shows.

    \begin{figure}[ht]
      \begin{center}
        \begin{tikzpicture}[scale=1.1,dot/.style={draw,circle,minimum size=1.4mm,inner sep=0pt,outer sep=0pt,fill}]
  \coordinate [dot] (A) at (0,0);
  \coordinate [dot] (B) at (1,0);
  \coordinate [dot] (C) at (2,0);
  \coordinate [dot] (D) at (3,0);
  \coordinate [dot] (E) at (4,0);
  \coordinate [dot] (F) at (2,-1);
  \coordinate [dot] (G) at (3,-1);
    
  \draw (A) -- node[midway,above] {$1$} (B) -- node[midway,above] {$1$} (C) -- node[midway,above] {$1$} (D) -- node[midway,above] {$1$} (E);
  \draw (C) -- node[midway,left] {$1$} (F) -- node[midway,above] {$1$} (G) -- node[midway,right] {$2$} (D);
  
  \node at (1,-.5) {$G$};
\end{tikzpicture}
\hspace{36pt}
\begin{tikzpicture}[scale=1.1,dot/.style={draw,circle,minimum size=1.4mm,inner sep=0pt,outer sep=0pt,fill}]
  \coordinate [dot] (A) at (0,0);
  \coordinate [dot] (B) at (1,0);
  \coordinate [dot] (C) at (2,0);
  \coordinate [dot] (D) at (3,0);
  \coordinate [dot] (E) at (4,0);
  \coordinate [dot] (F) at (2,-1);
  \coordinate [dot] (G) at (3,-1);
    
  \draw (A) -- node[midway,above] {$1$} (B) -- node[midway,above] {$1$} (C) -- node[midway,left] {$1$} (F);
  \draw (G) -- node[midway,right] {$2$} (D) -- node[midway,above] {$1$} (E);
  
  \draw[gray,dashed] (C) -- (D);
  \draw[gray,dashed] (F) -- (G);
  
  \node at (1,-.5) {$T_1$};
  \node at (3.9,-.5) {$T_2$};
\end{tikzpicture}
      \end{center}
        \caption{The only MST of graph $G$ (left) contains all edges of $G$ except for the edge of weight $2$. The minimum spanning $2$-forest of $G$ consists of two trees $T_1$ and $T_2$ (right), and is not contained in the MST. }
        \label{tree_example}
    \end{figure}
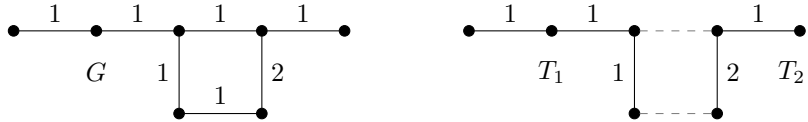

\subsection*{NP-Completeness} \label{NP-Complete}

The Main Problem is NP-complete. 
We give a proof for the decision version of the Main Problem: given a graph $G$ and a positive number $W$, decide whether $G$ has a minimum spanning $2$-forest of weight $W$.
NP-completeness of the Main Problem then follows.

\begin{theorem}\label{NP_theorem}
    The problem of deciding whether a graph $G$ has a minimum spanning $2$-forest of weight $W$ is NP-complete.
\end{theorem}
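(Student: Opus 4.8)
The plan is to establish the two ingredients required for NP-completeness: membership in NP and NP-hardness via a reduction from a known NP-complete problem. I would take the decision problem in the threshold form natural for a min-max objective, namely ``does $G$ admit a spanning $2$-forest whose weight $\max\{w(T_1),w(T_2)\}$ is at most $W$?'', and reduce from \textsc{Partition}, whose number-partitioning flavor matches the structure of the objective.

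For membership in NP, the natural certificate is the pair of trees $T_1, T_2$ itself. Given such a candidate, I would verify in polynomial time that (i) each $T_i$ is a connected acyclic subgraph of $G$, (ii) every vertex of $G$ lies in $T_1$ or $T_2$, and (iii) $\max\{w(T_1), w(T_2)\} \le W$. Each check is a single graph traversal together with a summation of edge weights, so verification runs in time polynomial in the size of $G$.

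For NP-hardness I would reduce from \textsc{Partition}: given positive integers $a_1, \ldots, a_n$ with $\sum_i a_i = 2S$, decide whether some subset sums to exactly $S$. The gadget I have in mind is a \emph{double star}: introduce two ``hub'' vertices $c_1$ and $c_2$, and for each item $i$ a vertex $v_i$ joined to \emph{both} hubs by edges of weight $a_i$; then set $W = S$. The design feature that drives the reduction is that attaching $v_i$ to $c_1$ rather than to $c_2$ corresponds to placing item $i$ on one side of the partition rather than the other, and in either case contributes exactly $a_i$ to the weight of the tree containing that hub. The equivalence I would prove is that $G$ has a spanning $2$-forest of weight at most $S$ if and only if the \textsc{Partition} instance is a yes-instance. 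In one direction, a partition $A \sqcup \bar A$ of the items into two sets of equal sum $S$ yields the forest whose first tree is $c_1$ together with the leaves $\{v_i : i \in A\}$ and whose second tree is $c_2$ together with $\{v_i : i \in \bar A\}$, each a star of weight $S$. Conversely, from any spanning $2$-forest of weight at most $S$ in which the hubs lie in different trees, each $T_j$ is forced to be a star centered at its hub (since item vertices are mutually nonadjacent), so the induced item sets form a partition with both sides at most $S$; as the two sums total $2S$, each equals $S$.

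The step I expect to be the main obstacle is ruling out, in the converse direction, the \emph{degenerate} forests in which both hubs $c_1, c_2$ land in the same tree. Because there is no direct $c_1$--$c_2$ edge, connecting the hubs inside a single tree forces some item vertex to use both of its incident edges, while the other tree can then hold at most one (isolated) item vertex. I would show that under the harmless assumption $a_i \le S$ for all $i$ (otherwise the \textsc{Partition} instance is trivially ``no''), every such degenerate forest has weight strictly greater than $S$, so it can never meet the threshold and the clean star-structured forests are the only ones that matter. Carrying out this case analysis pins the minimum achievable weight to exactly $S$ precisely when a balanced partition exists; since the construction is polynomial in the binary encoding of the $a_i$ and produces only nonnegative integer weights, NP-completeness follows.
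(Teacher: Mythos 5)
Your proof is correct, and it follows the same high-level strategy as the paper---a reduction from \textsc{Partition} using two hub vertices---but the execution differs in two ways worth noting. First, the gadget: the paper attaches, for each integer $x_i$, a middle vertex joined to both hubs $A$ and $B$ by edges of weight $0$, with the weight $x_i$ carried on a pendant edge hanging off that middle vertex; its argument then shows that any \emph{minimal} spanning $2$-forest must contain every pendant edge and exactly one zero-weight edge per middle vertex, so that minimality amounts to finding the most balanced partition. Your double-star gadget puts the weight $a_i$ directly on the two hub edges, which gives a smaller graph ($n+2$ vertices rather than $2m+2$), avoids zero-weight edges, and instead requires explicitly ruling out the degenerate forests in which both hubs share a tree; your argument for that case (some item vertex must use both of its incident edges, the other tree is a single isolated item vertex, and $a_i \le S$ forces the big tree's weight above $S + a_{i_0} > S$) is sound and plays the role of the paper's brief remark that $A$ and $B$ lie in separate trees except in trivial cases. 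Second, and more substantively, you pose the decision problem in threshold form (``weight at most $W$''), whereas the paper asks whether $G$ ``has a minimal spanning $2$-forest of weight $W$,'' i.e., whether the optimum value equals $W$. Your formulation is the better one: membership in NP is immediate for the threshold version via exactly the certificate you describe, while for the exact-optimum version a witness forest of weight $W$ does not certify that no lighter forest exists, so the paper's closing assertion that its problem is ``clearly also NP'' is not justified as stated. In this respect your write-up is more careful than the paper's own proof.
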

\begin{proof}
	We show that any algorithm that can solve the problem of deciding whether a graph $G$ has a minimum spanning $2$-forest of weight $W$ (for any $W$) can also decide whether a multiset of positive integers can be partitioned into two subsets with the same sum, which is an NP-hard problem.
    
    Let $x_1, x_2, \ldots, x_m$ be positive integers whose sum is $2W$.
    Consider the graph in \Cref{graphNP}.
    The graph contains $3m$ edges. Of these, $m$ are drawn vertically, with weights $x_1, \ldots, x_m$. The remaining edges have weight $0$.
    
    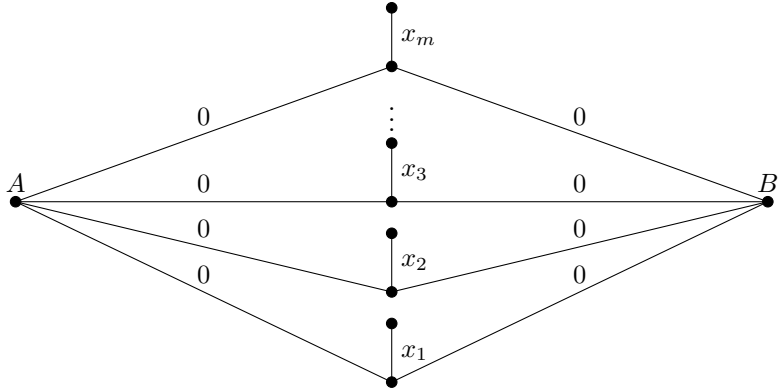
\begin{figure}[ht]
      \begin{center}
        \begin{tikzpicture}[yscale=0.6,dot/.style={draw,circle,minimum size=1.4mm,inner sep=0pt,outer sep=0pt,fill}]
    \coordinate [dot] (S) at (0,0);
    \node[above] at (S) {$A$};
    \coordinate [dot] (T) at (10,0);
    \node[above] at (T) {$B$};
    
    \draw (5,-4) coordinate [dot] (X1) -- node [midway,right] {$x_1$} +(0,1.3) coordinate [dot];
    \draw (S) -- node [midway, above] {$0$} (X1) -- node [midway, above] {$0$} (T);
    
    \draw (5,-2) coordinate [dot] (X2) -- node [midway,right] {$x_2$} +(0,1.3) coordinate [dot];
    \draw (S) -- node [midway, above] {$0$} (X2) -- node [midway, above] {$0$} (T);
    
    \draw (5,0) coordinate [dot] (X3) {} -- node [midway,right] {$x_3$} +(0,1.3) coordinate [dot];
    \draw (S) -- node [midway, above] {$0$} (X3) -- node [midway, above] {$0$} (T);
    
    \node at (5,2) {$\vdots$};
    
    \draw (5,3) coordinate [dot] (X4) {} -- node [midway,right] {$x_m$} +(0,1.3) coordinate [dot];
    \draw (S) -- node [midway, above] {$0$} (X4) -- node [midway, above] {$0$} (T);
\end{tikzpicture}
      \end{center}
        \caption{Graph $G$ used in the proof of \Cref{NP_theorem}.}
        \label{graphNP}
    \end{figure}
    
    We show that a minimum spanning $2$-forest of $G$ gives a partition of $\{ x_1, \ldots, x_m \}$.
    First, observe that since $G$ contains $2m+2$ vertices, a minimum spanning $2$-forest $F$ of $G$ must contain exactly $2m$ edges that form no cycles.

    All of the edges with weights $x_1, \ldots, x_m$ must be in $F$. Otherwise, $F$ would contain an isolated vertex, but a spanning $2$-forest that contains an isolated vertex cannot be minimum for $m > 1$.
    
    Thus, $F$ must also contain $m$ of the edges of weight $0$.
    Together with the edges of weight $x_i$, these edges form two trees.
    Except in the trivial case where some $x_i = W$, it must be that vertices $A$ and $B$ are in separate trees.

	Let $T_A$ be the tree containing $A$, and let $T_B$ be the tree containing $B$.
    Then $w(T_A) = x_{i_1} + \cdots + x_{i_p}$ and $w(T_B) = x_{j_1} + \cdots + x_{j_q}$, where $\{i_1, \ldots, i_p\}$ and $\{j_1, \ldots, j_q\}$ form a partition of $\{1, \ldots, m\}$.
    Moreover, $F$ is minimum if and only if the difference between $w(T_A)$ and $w(T_B)$ is as small as possible.
    This difference is zero if and only if $G$ has a minimum spanning $2$ forest of weight $W$, which is the case if and only if it is possible to partition the set $\{x_1, x_2, \ldots, x_m\}$ into two subsets, both of which sum to $W$.
    
    Therefore, an algorithm that can decide whether there exists a minimum spanning $2$-forest of $G$ of weight $W$ also solves the partition problem, which implies that the decision version of the Main Problem is NP-hard. 
    Since this problem is clearly also NP, it is NP-complete.
\end{proof}

Given that the Main Problem is NP-complete, we desire to find an efficient algorithm that produces an approximate solution to this problem. 
Roughly speaking, an ``approximate solution'' to the Main Problem is a spanning $k$-forest of $G$ such that the max-weight tree of the forest is not much larger than $\frac{w}{k}$, where $w$ is the weight of a minimum spanning tree of $G$.

We now present two such algorithms. 
The first involves a \emph{spectral clustering} technique that is similar to the ``normalized cuts'' method of image segmentation.
The second algorithm uses a dynamic programming (DP) approach to partition a minimum spanning tree of $G$ into $k$ parts.
These algorithms perform well in practice, returning approximate solutions to the Main Problem in our tests (see \Cref{EmpiricalData} for empirical data).

\section{Spectral Algorithm}\label{SpectralApproach}
	
It seems natural to recast the Main Problem as a clustering problem: we wish to find $k$ clusters of the vertices of $G$ such that the vertices within each cluster are connected by relatively low-weight edges, and the edges that connect different clusters have relatively higher weights. Thus, we can find an approximate solution by using a \emph{spectral clustering} algorithm to obtain such clusters of the vertices in $G$, and then finding a minimum spanning tree within each cluster. Specifically, our algorithm is based on the normalized spectral clustering of Shi and Malik, commonly known as the \emph{normalized cuts} algorithm \cite{NormCuts}.

\subsection*{Step 1: Calculating Similarity}

Spectral clustering requires a similarity metric: a real-valued function on pairs of vertices that takes large values on pairs of similar vertices and values close to zero on pairs of dissimilar vertices.
In our case, two vertices are ``similar'' if they are connected by a low-weight path in $G$.
Specifically, suppose $V = \{v_1, v_2, \ldots, v_m \}$.
Let $P(i,j)$ be the weight of the minimum-weight (``shortest'') path between vertices $v_i$ and $v_j$ in $G$, and let $L$ be the maximum weight of any shortest path between any two vertices in $G$.
We then define the function $\omega(i,j)$ as
    \begin{equation*}
	    \omega(i,j) = L - P(i,j)
    \end{equation*}
The function $\omega$ gives the notion of similarity between vertices which we use for our spectral clustering.
Let $W$ be the $m\times m$ matrix of similarity values; that is, $W(i,j) = \omega(i,j)$.
Note that $W$ is a symmetric matrix.

\subsection*{Step 2: Finding an Eigenvector}
We now introduce some terminology to explain the normalized cuts algorithm.
Suppose we desire to partition the vertex set $V$ of $G$ into two nonempty disjoint subsets $A$ and $B$ (with $A \cup B = V$).
We define a \emph{cut} in graph $G$ as follows:
    \begin{equation*}
        \cut(A,B)= \sum_{u\in A, v\in B} \omega(u,v).
    \end{equation*}

Choosing $A$ and $B$ to minimize $\cut(A,B)$ may result in a lopsided partition: for example, $A$ might consist of a single vertex. To avoid this, we use a normalized cut. 
We first define a measure of association between a vertex subset $A$ and the entire vertex set $V$: $\assoc(A,V) = \sum_{u\in A, t \in V} \omega(u,t)$.
This allows us to \emph{normalize} a graph cut, which takes into account the association between each subset and the entire vertex set.
Concretely, a \emph{normalized cut} is defined:
    \begin{equation}
        \Ncut(A,B)= \frac{\cut(A,B)}{\assoc(A,V)} + \frac{\cut(A,B)}{\assoc(B,V)}.
    \end{equation}
Similarly, we define \emph{normalized association}:
    \begin{equation}
        \Nassoc(A,B) = \frac{\assoc(A,A)}{\assoc(A,V)} + \frac{\assoc(B,B)}{\assoc(B,V)}.
    \end{equation}
Shi and Malik prove that minimizing $\Ncut$ is equivalent to maximizing $\Nassoc$. 
While this problem is NP-hard, an approximate solution can be obtained by solving a generalized eigenvalue problem \cite{NormCuts}.

With matrix $W$ as defined above, let $D$ be a $m \times m$ diagonal matrix where $D_{ii}$ is the sum of elements in row $W_i$. Consider the equation
   \begin{equation}\label{eigen}
        (D-W)y = \lambda D y.
    \end{equation}
Solving equation \eqref{eigen} for $\lambda$ and $y$ is a symmetric generalized eigenvalue problem.
A vector $y$ corresponding to the smallest positive eigenvalue $\lambda$ of \eqref{eigen} can be used to obtain subsets $A$ and $B$ that approximately minimize $\Ncut(A,B)$.
        
\subsection*{Step 3: Constructing the subgraphs}
The entries in eigenvector $y$ indicate the partition of $V$ as follows: if $y_i > 0$, then $v_i \in A$; otherwise $v_i \in B$.
We then construct a subgraph $G_A$ with vertex set $A$ by including all edges in $G$ that connect two vertices within $A$; we similarly construct a subgraph $G_B$ with vertex set $B$. If $k=2$, then we find a MST within each subgraph; this provides an approximate solution to the Main Problem. An issue arises when a value $y_i$ is very close to $0$, as $v_i$ could be adjacent to a cut edge and may be placed in a vertex set $A$ such that no edge in $G$ connects $v_i$ to any vertex in $A$. If this occurs then we run Dijkstra's Algorithm to connect $v_i$ with the nearest vertex $v_j \in A$ adding all the vertices in the path to $A$.

\subsection*{More than two blocks}

If $k$, the desired number of trees, is a power of $2$, then we simply apply the process described in Steps 1 through 3 recursively on the subgraphs $G_A$ and $G_B$, until we have obtained the desired number of subgraphs. We then find a MST within each subgraph. This strategy performs very well in practice when $k$ is a power of $2$, as seen in \Cref{EmpiricalData}.

Unfortunately, the spectral algorithm does not produce desirable solutions to the Main Problem if $k$ is not a power of $2$. 
In other applications, spectral clustering can be used to find more blocks than desired in a partition, then blocks are combined as necessary (e.g., via $k$-means clustering) to obtain the desired number of blocks \cite{NormCuts}. However, this approach is problematic in our case since the algorithm yields subtrees of roughly equal weight; combining two or more subtrees is likely to produce a new subtree much larger than the others. 
We are not aware of a method for adapting the spectral algorithm to obtain an approximately minimum spanning $k$-forest when $k$ is not a power of $2$.

\subsection*{Summary}

In short, the spectral approach converts the graph into two matrices based on a defined similarity metric. These matrices are used to solve a generalized eigenvalue problem to obtain an eigenvector that determines a partition of the vertex set into two blocks. Subgraphs are then constructed by connecting every vertex in a given block to vertices in the same block with the edges in the original graph, which partitions the graph into two blocks. This process is repeated to obtain the number of desired blocks.

\subsection*{Computational Complexity}

Computation of $\omega(u,v)$ for all pairs of vertices $u$ and $v$ requires computing the shortest paths between all pairs of vertices. 
Our implementation computes all shortest paths using the Floyd-Warshall algorithm, which runs in $O(|V|^3)$ time \cite{AlgText}. 
More efficient algorithms, such as the Pettie and Ramachandran algorithm, are available \cite{PettieRamachandran}.

The eigenvector can also be found via various algorithms.
Since these methods involve convergence of matrices under iterations of the algorithm, stating precise runtime bounds is difficult.
We use the software LAPACK (Linear Algebra PACKage) to find the eigenvector. Specifically, we call the LAPACK routine {\sf DSPGVX}, which uses generalized Schur decomposition (also called QZ decomposition) \cite{LAPACK}. 

\section{Dynamic Programming Algorithm}\label{DPApproach}

The dynamic programming approach partitions a minimum spanning tree of $G$ into $k$ pieces, obtaining an approximate minimum spanning $k$-forest of $G$.
Thus, the algorithm involves two steps.
First, we find a MST of $G$.
For this step, we developed a modified Kruskal's algorithm to reduce the branching of the MST, which makes our second step more effective.
In the second step, we apply a dynamic programming strategy to the MST, removing $k - 1$ edges to obtain a spanning $k$-forest of $G$.

\subsection*{Step 1: Modified Kruskal's Algorithm}

Kruskal's algorithm is a standard algorithm for finding a MST: the algorithm begins by initializing a forest $F$ that contains each vertex in $G$ as a separate tree, and a set $S$ containing all edges of $G$. The algorithm considers all edges in $S$ in order of increasing weight. If an edge $e$ connects two different trees in $F$, then the algorithm replaces these two trees with a single tree formed by their union together with $e$. Any edge $e$ in $S$ that does not connect two different trees would create a cycle, and is removed from $S$. If $G$ is connected, then the algorithm stops when $F$ contains a single (spanning) tree.

We use a modified version of Kruskal's algorithm to find a MST of $G$ with relatively low degree. Specifically, we modify Kruskal's algorithm by refining the ordering of edges in $S$.
When $S$ contains two or more edges of equal weight, each of which connects two trees in $F$, we select the one that minimizes the degree of the nodes that would be connected in the resulting tree. Our algorithm appears in \cref{alg:modKruskal}.

This strategy of giving priority to edges that connect low-degree vertices at each step of Kruskal's algorithm is not guaranteed to always produce the minimum-degree MST, but we have found that it produces relatively low-degree MSTs in practice.

  \begin{algorithm}[h]
    \caption{Modified Kruskal's Algorithm}\label{alg:modKruskal}
    \footnotesize
    \begin{algorithmic}[1]
      \Require Connected edge-weighted graph $G = (V, E)$
      \Ensure Minimum spanning tree $T$ of $G$ of low degree
      \State Initialize $F$ to contain each $v \in V$ as a tree
      \State $S \leftarrow E$
      \While{$F$ contains more than $1$ tree}
        \State Remove from $S$ all edges that do not connect two trees in $F$
        \State $S' \leftarrow$ set of lowest-weight edges in $S$
        \State $e \leftarrow $ null; $d \leftarrow \infty$; $T_1 \leftarrow$ null; $T_2 \leftarrow$ null
        \ForEach{$e' = (v_1,v_2) \in S'$}
          \State $T'_1 \leftarrow$ tree in $F$ containing $v_1$
          \State $T'_2 \leftarrow$ tree in $F$ containing $v_2$
          \State $d' \leftarrow$ max degree of $v_1$ and $v_2$ in $T'_1 \cup e' \cup T'_2$
          \If{$d' < d$}
            \State $e \leftarrow e'$; $d \leftarrow d'$; $T_1 \leftarrow T'_1$; $T_2 \leftarrow T'_2$
          \EndIf
        \EndFor
        \State Replace $T_1$ and $T_2$ with $T_1 \cup e \cup T_2$ in $F$
        \State Remove $e$ from $S$
      \EndWhile
    \end{algorithmic}
  \end{algorithm}

\subsection*{Step 2: Dynamic Programming Strategy}

Let $T$ be the MST of $G$ found by \Cref{alg:modKruskal} above.
We now use a dynamic programming process to find $k-1$ edges that we can ``cut'' from $T$ to produce a spanning $k$-forest of $G$.
Our algorithm appears in \cref{alg:DPCuts}.

We first find a path $P$ in $T$ of maximal weight; let $s$ and $t$ be vertices at either end of this path.
Let $e_1, \ldots, e_n$ denote the edges along path $P$ from $s$ to $t$.
We will regard these edges as arranged horizontally with $s$ on the left and $t$ on the right, as in \cref{dp_example}.
For $1 \le i \le n$, let $t_i$ be the weight of the portion of $T$ to the left of $e_i$, including $e_i$.
For the tree depicted in \cref{dp_example}, $t_1 = 2$, $t_2 = 6$, and so on.
For notational convenience below, let $t_0 = 0$ and $t_{n+1} = t_n$, which is the weight of $T$.

  \begin{algorithm}[h]
    \caption{Dynamic Programming Partitioning}\label{alg:DPCuts}
    \footnotesize
    \begin{algorithmic}[1]
      \Require Tree $T$, $k = $ number of parts desired 
      \Ensure Cut set $S$ of edges to remove from $T$
      \State Find the longest path $P$ in $T$; let $P = \{e_1, e_2, \ldots, e_n\}$
      \For{$i$ from $1$ to $n$}
      	\State $t_i \leftarrow$ weight of $T$ to the left of and including edge $e_i$
      \EndFor
      \State $t_0 \leftarrow 0$; $t_{n+1} \leftarrow t_n$
      \State $r_0 \leftarrow 0$
      \State $p_0 \leftarrow 0$
      \For{$i$ from $1$ to $n+1$}
        \State $\displaystyle p_i \leftarrow \min_{0 \le j < i} \left( \left| (t_i - w(e_i) - t_j) - \frac{w(T) - r_j - w(e_i)}{k} \right| + p_j \right)$ 
        \State $q_i \leftarrow j$ that minimizes the quantity above
        \State $r_i \leftarrow r_{q_i} + e_i$
      \EndFor
      \State Initialize the cut set $S$ to the empty set
      \State $c \leftarrow q_{n+1}$ \Comment{Index of first edge to cut}
      \While{$c \ne 0$}
         \State Add $e_c$ to the set $S$
         \State $c \leftarrow q_c$ \Comment{Index of next edge to cut}
      \EndWhile
    \end{algorithmic}
  \end{algorithm}
  
  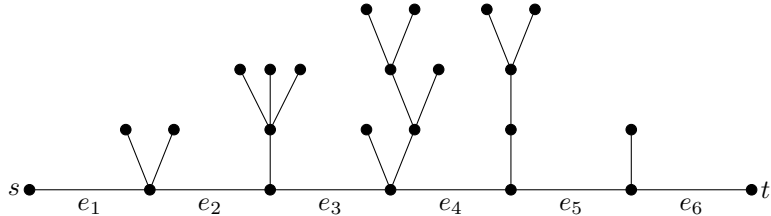
\begin{figure}[ht]
    \begin{center}
      \begin{tikzpicture}[scale=0.8,dot/.style={draw,circle,minimum size=1.4mm,inner sep=0pt,outer sep=0pt,fill}]
  \coordinate [dot] (v0) at (0,0);
  \coordinate [dot] (v1) at (2,0);
  \coordinate [dot] (v2) at (4,0);
  \coordinate [dot] (v3) at (6,0);
  \coordinate [dot] (v4) at (8,0);
  \coordinate [dot] (v5) at (10,0);
  \coordinate [dot] (v6) at (12,0);
  
  \draw (v0) -- node[midway,below] {$e_1$} (v1) -- node[midway,below] {$e_2$} (v2) -- node[midway,below] {$e_3$} (v3) -- node[midway,below] {$e_4$} (v4) -- node[midway,below] {$e_5$} (v5) -- node[midway,below] {$e_6$} (v6);
  
  \node[left] at (v0) {$s$};
  \node[right] at (v6) {$t$};
  
  \coordinate [dot] (w10) at (1.6,1);
  \coordinate [dot] (w11) at (2.4,1);
  \draw (w10) -- (v1) -- (w11);
  
  \coordinate [dot] (w20) at (4,1);
  \coordinate [dot] (w21) at (3.5,2);
  \coordinate [dot] (w22) at (4,2);
  \coordinate [dot] (w23) at (4.5,2);
  \draw (v2) -- (w20) -- (w21);
  \draw (w22) -- (w20) -- (w23);
  
  \coordinate [dot] (w30) at (5.6,1);
  \coordinate [dot] (w31) at (6.4,1);
  \coordinate [dot] (w32) at (6,2);
  \coordinate [dot] (w33) at (6.8,2);
  \coordinate [dot] (w34) at (5.6,3);
  \coordinate [dot] (w35) at (6.4,3);
  \draw (w30) -- (v3) -- (w31);
  \draw (w32) -- (w31) -- (w33);
  \draw (w34) -- (w32) -- (w35);
   
  \coordinate [dot] (w40) at (8,1);
  \coordinate [dot] (w41) at (8,2);
  \coordinate [dot] (w42) at (7.6,3);
  \coordinate [dot] (w43) at (8.4,3);
  \draw (v4) -- (w40) -- (w41);
  \draw (w42) -- (w41) -- (w43);

  \coordinate [dot] (w5) at (10,1);
  \draw (v5) -- (w5);
\end{tikzpicture}
    \end{center}
      \caption{Sample tree $T$ to illustrate the dynamic programming algorithm. Edges $e_1, \ldots, e_6$ comprise path $P$ and each have weight $2$; all other edges have weight $1$.}
      \label{dp_example}
  \end{figure}
  
We iterate over each edge in path $P$ (from $e_1$ to $e_n$) and calculate two quantities associated with each edge: a ``penalty'' and a ``discount.'' 
For edge $e_i$, the penalty is denoted $p_i$ and the discount is denoted $r_i$.
Intuitively, the penalty $p_i$ is a positive number that is close to zero if it is possible to cut edge $e_i$, along with some edges from the set $\{ e_1, \ldots, e_{i-1} \}$, and obtain a set of trees whose weights are near $\frac{w(T) - r_i}{k}$.
The discount $r_i$ is the sum of the weight of edge $e_i$ and weights of edges to the left of $e_i$ that are cut to obtain the lowest penalty.
  
For notational convenience, let $p_0 = 0$.
Then, for $i = 1, \ldots, n+1$, the penalty $p_i$ is defined:
  \begin{equation}\label{penalty}
    p_i = \min_{0 \le j < i} \left( \left| (t_i - w(e_i) - t_j) - \frac{w(T) - r_j - w(e_i)}{k} \right| + p_j \right).
  \end{equation}
In equation \eqref{penalty}, the quantity $(t_i - w(e_i) - t_j)$ is the weight of the subtree \emph{between} edges $e_j$ and $e_i$, and the quantity $\frac{w(P) - r_j - w(e_i)}{k}$ is the weight of each subtree \emph{if it were possible} to obtain $k$ subtrees by removing edge $e_i$ and the edges counted in the discount $r_j$.
Furthermore, to make sense of equation \eqref{penalty}, we define $w(e_{n+1}) = 0$; intuitively, we may regard $e_{n+1}$ as an edge of weight $0$ to the right of $e_n$, which is \emph{always} the last edge that our algorithm cuts from $T$.
Thus, penalty $p_{n+1}$ is the total penalty associated with the best possible cut set for the entire tree $T$.  

For each $i$ from $1$ to $n+1$, the algorithm stores not only $p_i$ and $r_i$, but also the index $j$ that minimizes the quantity in equation \eqref{penalty} above; let $q_i$ be this index.
Thus we can compute the discount $r_i = r_{q_i} + w(e_i)$, which is used in the calculation of penalties $p_\ell$ for $\ell > i$.
Moreover, at the conclusion of the dynamic programming process, the $q_i$ allow us to recover the edges that we will cut from $T$ to minimize the penalty $p_{n+1}$.

Specifically, to obtain the set of edges that we must cut, we backtrack from along $P$ from $e_n$ to $e_1$ (right to left in \cref{dp_example}).
The index rightmost edge to cut is given by $q_{n+1}$; the index of the next edge to cut is given by $q_{q_{n+1}}$, and so on. 
We have found the entire cut set when we encounter some $q_{j} = 0$.
Removing the edges in the cut set from $T$, we obtain a spanning forest $F$.

\begin{example}
  Running \cref{alg:DPCuts} on the tree $T$ depicted in \cref{dp_example} with $k=3$, we obtain the values displayed in the following table.
  \begin{center}\begin{tabular}{ccccc}
    \toprule
    $i$ & $t_i$ & $p_i$ & $q_i$ & $r_i$ \\
    \midrule
    $0$ & $0$ & $0$ &   & $0$ \\
    $1$ & $2$ & $9$ & $0$ & $2$ \\
    $2$ & $6$ & $5$ & $0$ & $2$ \\
    $3$ & $12$ & $1$ & $0$ & $2$ \\
    $4$ & $20$ & $10/3$ & $3$ & $4$ \\
    $5$ & $26$ & $14/3$ & $3$ & $4$ \\
    $6$ & $29$ & $4$ & $4$ & $6$ \\
    $7$ & $29$ & $4$ & $4$ & $4$ \\
    \bottomrule
  \end{tabular}\end{center}
  The algorithm proceeds along path $P$ from left to right.
  Cutting edge $e_1$ would result in an isolated vertex (weight $0$), while the rest of the tree has weight $27$, so penalty $p_1 = \left| 0 - \frac{27}{3} \right| = 9$ and discount $r_1 = 2$. 
  If edge $e_2$ is cut, then it is best not to cut $e_1$, but to obtain a subtree of weight $4$, so penalty $p_2 = \left| 4 - \frac{27}{3} \right| = 5$ and discount $r_1 = 2$.
  Similarly, if edge $e_3$ is cut, then the penalty $p_3 = 1$ is obtained by cutting neither $e_2$ nor $e_1$.
  However, if edge $e_4$ is cut, then the lowest penalty is obtained by also cutting $e_3$; the subtree between $e_3$ and $e_4$ has weight $6$, and with two edges cut, penalty $p_4 = \left| 6 - \frac{25}{3} \right| + 1 = \frac{10}{3}$ and discount $r+4 = 4$.
  The other penalties and discounts are obtained similarly.
  
  The algorithm then backtracks to find the cut set.
  Since $q_7 = 4$, edge $e_4$ will be cut. 
  Next, $q_4 = 3$ implies that edge $e_3$ will be cut.
  However, $q_3 = 0$, so no other edges will be cut.
  Therefore, edges $e_3$ and $e_4$ are cut, resulting in a spanning forest $F$ containing trees of weight $10$, $6$, and $9$.
\end{example}

While the penalty calculation in \Cref{alg:DPCuts} gives preference to cuts that result in subtrees of weight close to $\frac{w(T)}{k}$, the algorithm is not \emph{guaranteed} to return a cut set of size $k-1$. Thus, $F$ is not guaranteed to contain $k$ trees.
In practice, the algorithm often does return a cut set of size $k-1$.
However, if the algorithm returns a cut set of size less than $k-1$, we find additional edges to cut by repeatedly applying \Cref{alg:DPCuts} to the largest remaining subtree. 
If the algorithm returns a cut set of size greater than $k-1$, then we join the pair of smallest subtrees in $F$ that are adjacent in $T$, continuing to do so until $F$ contains exactly $k$ trees.

\subsection*{Computational Complexity}

It is well known that the standard Kruskal's Algorithm runs in $O(|E| \log|E|)$ time, using a union-find data structure to store the forest $F$ \cite{AlgText}.
In our modified algorithm, at each iteration of the while loop in \Cref{alg:modKruskal}, we must identify the edge, among edges of lowest weight that do not create cycles, that connects minimum-degree vertices.
Maintaining a list of vertex degrees in $F$ allows for constant-time vertex degree lookup, but we must still find the minimum among $O(|E|)$ edges at each iteration of the while loop.
Since this loop runs $O(|E|)$ times, the complexity of our modified Kruskal's algorithm is $O(|E|^2)$.

Our dynamic programming algorithm begins by finding the longest path $P$ in $T$.
This requires finding the distance between all pairs of nodes in $T$, which can be accomplished in $O(|V|^2)$ time by a depth-first search of $T$.
We then iterate over every edge in $P$ and perform a constant-time calculation for each previously-considered edge in $P$.
Since the number of edges in $P$ is not more than $|E|$, this requires $O(|E|^2)$ time.
Since $E = V-1$, the overall complexity of the dynamic programming algorithm is $O(|E|^2)$.

\section{Empirical Data}\label{EmpiricalData}

We tested our algorithms, with various values of $k$, on a family of 90 graphs.
Since the motivation for our work arises from the computational topology software RIVET, we chose graphs representative of those that are encountered in RIVET. These graphs are the dual graphs of planar line arrangements \cite{LesnickWright}.

\subsection*{Methodology}

We first generated a collection of $m$ random lines in the plane. 
These lines form the boundaries of a collection of two-dimensional cells ($2$-cells) in the plane; each $2$-cell has positive (possibly infinite) area. 
The $2$-cells determine a graph $G$ (the \emph{dual graph} of the line arrangement) as follows: each $2$-cell is a vertex of $G$, and two vertices are connected by an edge if and only if the corresponding $2$-cells have a common boundary (this common boundary is a segment contained in one of the lines).
For each line $\ell$, we chose a random integer between $1$ and $1000$, which is used to weight all edges in $G$ that connect cells whose common boundary is line $\ell$.

We repeated the above process $90$ times, for choices of $m$ between $5$ and $100$, to generate $90$ graphs.
We then used our algorithms described in \cref{SpectralApproach} and \cref{DPApproach} to produce spanning $k$-forests from each graph, for values of $k$ between $2$ and $32$.

\subsection*{Data from Two Graphs}

We present here data of the minimum spanning forests obtained from two of our graphs, which are representative of what we observed from our experiments.

% Graph $G_1$ is graph50Bfull
Graph $G_1$ is the dual graph of an arrangement of 50 lines; this graph contains 1154 vertices and 2213 edges.
A minimum-spanning tree $T_1$ of $G_1$ has weight 325,427.
We tested our algorithms on this graph for each integer $k$ from $2$ to $8$.
However, we only ran the spectral algorithm when $k$ is a power of $2$ because the algorithm does not give desirable results for other choices of $k$, as explained in \cref{SpectralApproach}.

\cref{graphA_data} summarizes the spanning $k$-forests found by our algorithms when run on Graph $G_1$, for integers $k$ from $2$ to $8$. 
Values in the \emph{Heaviest Subtree} column give the weight of the heaviest tree in the spanning $k$-forest found by the algorithm.
The ratio of the weight of the heaviest subtree to the MST weight is given in the \emph{Ratio} column. 
A lower ratio is better; a ratio of $\frac{1}{k}$ would be ideal in the sense that such a ratio would result from a partition of a MST into $k$ subtrees of equal weight, if all removed edges have weight zero.
However, since some edges of $T_1$ are cut and do not appear in any tree of the spanning $k$-forest, the ratio could be less than $\frac{1}{k}$, but this does not occur in \cref{graphA_data}.

\begin{table}%data is from graph50Bfull
    \begin{center}\begin{tabular}{l l r r r}
    \toprule
    $k$ & Algorithm & Heaviest Subtree & Ratio \\
    \midrule
    2 & DP & 178,945 & 0.550 \\
    2 & Spectral & 166,010 & 0.510 \\
    3 & DP & 109,338 & 0.336 \\
    4 & DP & 93,334 & 0.287 \\
    4 & Spectral & 90,061 & 0.277 \\
    5 & DP & 69,136 & 0.212 \\
    6 & DP & 67,524 & 0.208 \\
    7 & DP & 53,709 & 0.165 \\
    8 & DP & 53,076 & 0.163 \\
    8 & Spectral & 50,755 & 0.156 \\
    \bottomrule
    \end{tabular}\end{center}
    \caption{Empirical data for Graph $G_1$. The MST of $G_1$ has weight 325,427.}
    \label{graphA_data}
\end{table}

% Graph $G_2$ is graph100Afull
Graph $G_2$ is the dual graph of an arrangement of 100 lines, and contains 4442 vertices and 8543 edges.
A minimum-spanning tree $T_2$ of $G_2$ has weight 1,627,441.
We tested our algorithms on this graph, letting $k$ take on powers of $2$ up to $32$. This allowed us to directly compare our algorithms for each value of $k$. 
\Cref{graphB_data} summarizes the spanning $k$-forest found by our algorithms for Graph $G_2$; the meaning of each column is the same as in the previous table.

The results reported in \cref{graphA_data,graphB_data} are typical of our tests.
We note that the reported ratios are close to $\frac{1}{k}$ in each case.
Since the runtime for the RIVET computation is roughly proportional to the weight of the heaviest tree, the reported ratios represent the factors by which we expect the RIVET runtime to improve when the algorithms are incorporated into RIVET.
Unfortunately, we have no guaranteed bound on the difference between the ratio and $\frac{1}{k}$; this is a question for further research.

 \begin{table}%data is from graph100Afull
     \begin{center}\begin{tabular}{l l r r r}
     \toprule
     $k$ & Algorithm & Heaviest Subtree & Ratio \\
     \midrule
     2 & DP & 813,748 & 0.500 \\
     2 & Spectral & 837,742 & 0.515 \\
     4 & DP & 522,254 & 0.321 \\
     4 & Spectral & 424,436 & 0.261 \\
     8 & DP & 274,441 & 0.169 \\
     8 & Spectral & 229,936 & 0.141 \\
     16 & DP & 169,941 & 0.104 \\
     16 & Spectral & 128,014 & 0.079 \\
     32 & DP & 72,667 & 0.045 \\
     32 & Spectral & 68,909 & 0.042 \\
     \bottomrule
     \end{tabular}\end{center}
     \caption{Empirical data for Graph $G_2$. The MST of this graph has weight 1,627,441.}
     \label{graphB_data}
 \end{table}

\subsection*{Summary of Experiments}

As a summary of our other tests, \cref{graph_summary} gives statistics from our tests on our $30$ largest graphs, with $k$ taking on powers of $2$, up to $32$.
For each graph, algorithm, and value of $k$, we computed the ratio of the weight of the largest subtree found by the algorithm to the weight of the MST of the graph (as reported in the \emph{Ratio} columns of \Cref{graphA_data,graphB_data}). 
The table gives the average and standard deviation of $30$ such ratios for each algorithm and each $k$. We used a paired $t$-test to compare the ratios achieved by the two algorithms; the table gives the $t$-statistics and 2-sided $p$-values for each $k$.

\begin{table}%summary data of all tests with "full" graphs
    \begin{center}\begin{tabular}{r c c c c c}
    \toprule
    $k$ & $2$ & $4$ & $8$ & $16$ & $32$ \\
    \midrule
    Spectral average & $0.517$ & $0.274$ & $0.149$ & $0.082$ & $0.048$ \\
    Spectral st.\ dev. & $0.011$ & $0.011$ & $0.008$ & $0.008$ & $0.014$ \\
	DP average & $0.530$ & $0.288$ & $0.158$ & $0.084$ & $0.044$ \\
    DP st.\ dev. & $0.049$ & $0.024$ & $0.019$ & $0.008$ & $0.003$ \\
    $t$-statistic & $1.398$ & $3.063$ & $2.681$ & $0.913$ & $-1.674$ \\
    $p$-value & $0.173$ & $0.005$ & $0.012$ & $0.369$ & $0.105$ \\
    \bottomrule
    \end{tabular}\end{center}
    \caption{Summary of tests from $30$ large graphs. For each algorithm and each value of $k$, we give the mean and standard deviation of $30$ ratios computed. We used a paired $t$-test to compare the ratios; the $t$-statistics and 2-sided $p$-values appear here.}
    \label{graph_summary}
\end{table}

Our spectral algorithm performed very well in our tests with $k$ a small power of $2$, consistently producing ratios close to $\frac{1}{k}$.
Our dynamic programming algorithm performed reasonably well in all cases, but generally not quite as well as the spectral algorithm when $k \in \{2, 4, 8, 16\}$.
Notably, for small $k$ the ratios achieved by the spectral algorithm exhibit less variation than those achieved by the DP algorithm, as indicated by the standard deviations in \cref{graph_summary}. 
Furthermore, the difference between the ratios produced by the two tests were most statistically significant for $k=4$ and $k=8$, as shown by the $p$-values in \cref{graph_summary}. 

Interestingly, we noticed that for some graphs, the DP algorithm performs better for certain smaller values of $k$ than for certain larger values of $k$. 
For example, when run on a particular graph, the algorithm returns a smaller largest subtree when $k=7$ than when $k=8$.
This is due to the fact that the initial cutting process does not always produce exactly $k$ subtrees, in which case we cut the largest subtree again.
Of course, we do not expect this algorithm to return \emph{the optimal} partition, but it does seem to return approximately optimal partitions in every case.

\subsection*{Runtime}

The DP algorithm ran much faster than the spectral algorithm in our tests, but the runtime of the spectral algorithm was dominated by the runtime of the Floyd-Warshall algorithm for finding all shortest paths. We report here a few sample runtimes for the graphs discussed above, recorded when testing our algorithms on a typical laptop computer.

For graph $G_1$ with $k=8$, the DP algorithm ran in 0.2 seconds. The spectral algorithm ran in 26.0 seconds; of this time, 24.0 seconds was spent running the Floyd-Warshall algorithm.
For the graph $G_2$ with $k=32$, the DP algorithm ran in 8.3 seconds. The spectral algorithm ran in 1,546.2 seconds, of which 1,431.1 seconds was spent running the Floyd-Warshall algorithm.
Thus, it seems that more than 90\% of the spectral algorithm runtime is spent in the Floyd-Warshall algorithm. As mentioned in \cref{SpectralApproach}, more efficient algorithms for finding all shortest paths are available.

Importantly, the runtime for either of our algorithms is orders of magnitude less than the runtime of the RIVET computation.
Thus, we are confident that the incorporation of either algorithm into RIVET as the basis for a parallelized barcode template computation will result in substantial improvement in the RIVET runtime.
Indeed, the best strategy may be to run both the spectral and DP algorithms, and then to choose the resulting forest of lowest weight for the computation of the barcode templates.

\subsection*{Code}

The code used to generate the synthetic graphs and for the both the spectral and DP algorithms can be found at \url{https://github.com/a3madkour/Minimal-Spanning-Trees}.

\section{Conclusion}\label{Conclusion}

Though the Main Problem is NP-Hard, we have two algorithms that produce approximate solutions. 
Our spectral algorithm performed very well in our tests with $k$ a power of $2$. 
However, as stated in \cref{SpectralApproach}, we are unable to obtain desirable results with the spectral algorithm when $k$ is not a power of $2$. 
Our dynamic programming algorithm performed well in all cases, but less so than the spectral algorithm when $k$ is a small power of $2$.
We believe these algorithms are well-suited for our immediate application in computational topology, though our work has produced various questions for further study.

\subsection*{Future Work}

\paragraph{APX-Hardness.}
We attempted to find an algorithm that can produce a spanning $k$-forest whose weight is within a constant factor of the minimum spanning $k$-forest. However, it is unclear if such an algorithm can run in polynomial time; that is, we are not sure if the Main Problem is APX-hard. 

\paragraph{Spectral Clustering for non powers of 2.}
In \cref{SpectralApproach} we mentioned that there was no obvious way of using the spectral algorithm when $k$ is not a power of $2$.  However, there may yet exist a method of using the spectral approach effectively for values of $k$ that are not powers of $2$. 

\paragraph{DP algorithm questions.}
We suspect that it is possible to improve the performance of our dynamic programming algorithm. For example, it may be possible to modify our ``penalty'' calculation to obtain a better set of subtrees in the first iteration of the algorithm.
Furthermore, when the first iteration of the algorithm does not return exactly $k$ trees, what is the best strategy for making additional cuts? These questions require further research.

\paragraph{Performance guarantees.}
We would like to quantify how close our algorithms come to finding a minimum spanning forest, in a more rigorous way than what we offer in \Cref{EmpiricalData}.
This seems to be a difficult task. 
However, we suspect that progress could be made with the help of more computational experiments and graph-theoretic insight.

\paragraph{Restriction to Trees.}
The first version of this paper conjectured that the Main Problem remains NP-hard when the input graph is a tree, rather than a general graph. However, Vaishali et al.\ give a polynomial time algorithm for solving the Main Problem when the input is a tree \cite{Vaishali}. 
We would like to compare the empirical runtime of Vaishali et al.\ with that of the DP algorithm presented in this paper. We would also like to evaluate the effectiveness of using our modified Kruskal's algorithm, followed by the algorithm of Vaishali et al., for approximating a minimum spanning $k$-forest in a general graph.

\section*{Acknowledgements}

This work was supported by NSF DMS 1606967, the Collaborative Undergraduate Research and Inquiry (CURI) program at St. Olaf College, and the Kay Winger Blair endowment.

\end{document}